\documentclass[a4paper,10pt]{amsart}
\usepackage{amsmath,amsthm,xypic,
amssymb,
pdfsync
}
\newtheorem{thm}{Theorem}[section]

\newtheorem{prop}[thm]{Proposition}
\newtheorem{cor}[thm]{Corollary}

\newtheorem{lem}[thm]{Lemma}
\theoremstyle{definition}

\newtheorem{rem}[thm]{Remark}

\theoremstyle{plain}


\renewcommand{\phi}{\varphi}
\newcommand{\fix}{\mathop{\mathrm{Fix}}}

\newcommand{\sym}{\mathop{\mathrm{Sym}}}

\newcommand{\alt}{\mathop{\mathrm{Alt}}}
\newcommand{\nrd}{\mathop{\mathrm{Nrd}}}

\newcommand{\End}{\mathop{\mathrm{End}}}

\newcommand{\car}{\mathop{\mathrm{char}}}
\newcommand{\id}{\mathop{\mathrm{id}}}

\newcommand{\disc}{\mathop{\mathrm{disc}}}

\newcommand{\ad}{\mathop{\mathrm{Ad}}}

\newcommand{\Sym}{{\rm Sym}}

\title[{On split products of quaternion algebras with involution}]{On split products of quaternion algebras with involution in characteristic two}
\author{M. G. Mahmoudi, \ A.-H. Nokhodkar}
\thanks{\hspace{-.2cm}{\scriptsize{\em  E-mail Addresses}: {\tt
    mmahmoudi@sharif.ir} (M. G. Mahmoudi),
   {\tt
    anokhodkar@yahoo.com} (A.-H. Nokhodkar).}}
\begin{document}
\maketitle
\vspace{-.6cm}
\begin{center}
{\footnotesize
{\em Department of Mathematical Sciences, Sharif University of Technology,\\
 P. O. Box 11155-9415, Tehran, Iran.}}
\end{center}
\begin{abstract}
The question of whether a split tensor product of quaternion algebras with
involution over a field of characteristic two
 can be expressed as a tensor product of split quaternion algebras with
 involution,
is shown to have an affirmative answer.\\
\end{abstract}

\begin{center}
{\footnotesize
{\em Mathematics Subject Classification:} 16W10, 16K20, 11E88, 11E04.}\\
\end{center}

\section{Introduction}
In \cite[Ch. 9]{shapiro}, D. B. Shapiro formulates the following conjecture:\\

In the category of $F$-algebras with involution of the first kind, suppose $(A,\sigma)\simeq
(Q_1,\sigma_1)\otimes\cdots\otimes(Q_n,\sigma_n)$ where
each $(Q_k,\sigma_k)$ is a quaternion algebra with involution.
If the algebra $A$ is split, then there is a decomposition
$$(A,\sigma)\simeq
(Q'_1,\sigma'_1)\otimes\cdots\otimes(Q'_n,\sigma'_n)$$
where each $(Q'_k,\sigma'_k)$ is a split quaternion algebra with involution.\\

For the case where the characteristic of $F$ is different from $2$,
many authors have studied this conjecture either in its current
formulation or in its equivalent variations, including the Pfister
factor conjecture in the theory of
spaces of similarities (developed by Shapiro), Hurwitz problem of
composition of quadratic forms and Pfister involutions
(see \cite{wads-shap}, \cite{yuzvinsky}, \cite{shapiro},
\cite{fluckiger}, \cite{serhir}, \cite{sivatski}, \cite{grenier},
\cite{garibaldi}, \cite{karpenko}).
Finally in \cite{becher}, Becher proved this conjecture in the case of
characteristic different from $2$.

Although, Shapiro formulates this conjecture only for the case of
characteristic different from $2$, but he asks if there is some sort of
Pfister factor conjecture in characteristic $2$ (see
\cite[p. 371]{shapiro}) and cites a work by Baeza's student Junker in
this direction.
See also \cite{elduque}.
Thus it would be interesting to
investigate the above conjecture in the case of characteristic $2$.
This is the purpose of our work.

\section{Preliminaries}
Let $F$ be a field of characteristic $2$ and let $V$ be an $n$-dimensional vector space over $F$.
A bilinear form $b:V\times V\rightarrow F$ is called {\it alternating} if $b(v,v)=0$ for every $v\in V$.
Otherwise $b$ is called {\it nonalternating}.
It is known that a symmetric bilinear form $b$ is nonalternating if
and only if it is diagonalizable (see \cite[(1.17)]{elman}), i.e., there exists a basis $\{v_1,\cdots,v_n\}$ of $V$ such that $b(v_i,v_j)=0$ for every $i\neq j$.
Such a basis is called an {\it orthogonal basis} of $(V,b)$.
In this case if $b(v_i,v_i)=c_i$, $1\leqslant i\leqslant n$, the
bilinear form $b$ is denoted by $\langle c_1,\cdots,c_n\rangle$.

A {\it quadratic form} over $V$ is a map $q:V\rightarrow F$ such that $q(\alpha u+\beta v)=\alpha^2q(u)+\beta^2q(v)+\alpha\beta b(u,v)$ for every $\alpha,\beta\in F$ and $u,v\in V$,
where $b:V\times V\rightarrow F$ is a symmetric bilinear form.
The quadratic form $q$ is called {\it regular} if the bilinear form $b$ is nondegenerate.
By a {\it quadratic space} $(V,q)$ we mean a vector space
$V$ over $F$ together with a regular quadratic form $q$ over $V$.
Every quadratic space $(V,q)$ has a {\it symplectic basis}, i.e., a basis $\mathcal{B}=\{u_1,v_1,\cdots,u_n,v_n\}$ such that $b(u_i,v_i)=1$, $i=1,\cdots,n$, and $b(x,y)=0$ for every other pairs of vectors $x,y\in\mathcal{B}$.
A two dimensional quadratic space is called a {\it quadratic plane} and is denoted by $(\mathbb{E},\phi)$.

The group of all isometries of a quadratic space $(V,q)$, i.e., the {\it orthogonal group}
of $(V,q)$, is denoted by $O(V,q)$.
For an isometry $\tau\in O(V,q)$ we use the notation $\fix(V,\tau)=\{v\in V|\tau(v)=v\}$.
An isometry $\tau\in O(V,q)$ is called an {\it involution} if $\tau^2=\id$.
If $u\in V$ is an anisotropic vector, the involution $\tau_u\in O(V,q)$ defined by $\tau_u(v)=v+\frac{b(v,u)}{q(u)}u$ for every $v\in V$, is called the {\it (orthogonal) reflection} along $u$.
For an isometry $\tau\in O(V,q)$, the {\it spinor norm} of $\tau$ is denoted by $\theta(\tau)$.

Let $(\mathbb{A},q)$ be a quadratic space of dimension $4$ over a field $F$ of characteristic $2$.
An involution $\tau\in O(\mathbb{A},q)$ is called an {\it interchange isometry} if $\fix(\mathbb{A},\tau)$ is a two dimensional totally isotropic space.
If $(V,q)$ is a quadratic space over $F$, then for every involution
$\tau$ in $O(V,q)$, there exists an orthogonal decomposition
\begin{eqnarray}
V=W\perp\mathbb{E}_1\perp\cdots\perp\mathbb{E}_r\perp\mathbb{A}_1\perp\cdots\perp\mathbb{A}_s,
\end{eqnarray}
with the following properties:

(i) $\tau|_W=\id_W$;

(ii) each $\mathbb{E}_i$ is a quadratic subplane of $V$ and $\tau|_{\mathbb{E}_i}$ is a reflection;

(iii) each $\mathbb{A}_i$ is a $4$-dimensional subspace of $V$ and $\tau|_{\mathbb{A}_i}$ is an interchange isometry (see  \cite[Thm. 1]{wiitala}).

The decomposition $(1)$ is called a {\it Wiitala decomposition} of $(V,\tau)$ and the subspace $W$ is called a {\it maximal fixed orthogonal summand} of $V$.
Also for every involution $\id\neq\tau\in O(V,q)$, there exists a Wiitala decomposition of $(V,\tau)$ with exactly one of the following additional properties:

(a) There is no interchange isometry in this decomposition, i.e., $s=0$.

(b) There is no reflection in this decomposition, i.e., $r=0$ (see  \cite[Thm. 2]{wiitala}).

We say that $\tau$ is of {\it reflectional} (resp. {\it  interchanging}) kind if $V$ has a decomposition of the form (a) (resp. (b)).

Throughout this paper when we say $A$ is a {\it central simple $F$-algebra},
we implicitly suppose that $F$ is the center of $A$.
For an {\it involution} $\sigma$ (i.e., an anti-automorphism with
$\sigma^2=\id$) on a central simple $F$-algebra $A$, the set of {\it
  symmetric} and {\it alternating} elements of $A$ are defined as
follows:
$$\Sym(A,\sigma)=\{a\in A | \sigma(a)=a\},\
\alt(A,\sigma)=\{a-\sigma(a) | a\in A\}.$$
If $\car F=2$, an involution $\sigma$ on $A$ is of symplectic type if and only if $1\in\alt(A,\sigma)$ (see \cite[(2.6)]{knus}).
If $\sigma$ is of orthogonal type and $A$ is of even degree $n=2m$ over $F$, then the {\it discriminant} of $\sigma$ is defined as follows:
$$\disc\sigma=(-1)^m{\nrd}_A(a)F^{\times2}\in F^\times/F^{\times2} \hspace{.3 cm} \textrm{for}~ a\in \alt(A,\sigma)\cap A^*,$$
where $\nrd_A(a)$ is the reduced norm of $a$ and $A^*$ is the unit group of $A$.

The Clifford algebra of a quadratic space $(V,q)$ is denoted by $C(V)$.
It is easy to see that every involution $\tau\in O(V,q)$, induces an involution $J_\tau$ of the first kind on $C(V)$ such that $J_\tau(v)=\tau(v)$ for every $v\in V$.

If $b:V\times V\rightarrow F$ is a nondegenerate bilinear form,
the {\it adjoint involution} of $\End_F(V)$ with respect to $b$ is the unique involution $\ad_b$ on $\End_F(V)$ characterized by the property
$b(x,f(y))=b(\ad_b(f)(x),y)$ for every $x,y\in V$ and $f\in \End_F(V)$.

The transpose involution on $M_n(F)$, i.e., the algebra of all
$n\times n$ matrices over a field $F$, is denoted by $t$.
The {\it canonical involution} $\gamma$ on $M_2(F)$ is defined by
{\setlength\arraycolsep{2pt}
\begin{eqnarray*}
\gamma\left(\begin{array}{cc}a & b \\c & d\end{array}\right)=\left(\begin{array}{cc}d & -b \\-c & a\end{array}\right),
\end{eqnarray*}}
for $a,b,c,d\in F$.
It is known that the involution $\gamma$ is the unique symplectic
involution on $M_2(F)$ and it is characterized by the property
$\gamma(x)x\in F$ for every $x\in M_2(F)$
(see \cite[Ch. I]{knus}).

We need the following results:
\begin{prop} \label{clif}{\rm(\cite[(6.1) and (4.11)]{mahmoudi})}
Let $(Q,\sigma)$ be a quaternion algebra with an involution of the first kind over a field $F$ of characteristic $2$.
Then there is a quadratic plane $(\mathbb{E},\phi)$ over $F$ and an involution $\tau$ in $O(\mathbb{E},\phi)$ such that $(Q,\sigma)\simeq (C(\mathbb{E}),J_\tau)$.
Furthermore if $\sigma$ is of symplectic type then $\tau$ is necessarily equal to identity and if $\sigma$ is of orthogonal type then $\tau$ is necessarily a reflection.
In the latter case we have $\disc J_\tau=\theta(\tau)$.
\end{prop}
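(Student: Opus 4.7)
The plan is to realize $Q$ explicitly as the Clifford algebra of a $2$-dimensional quadratic space $(\mathbb{E},\phi)$ sitting inside $Q$, to match $\sigma$ with the induced involution $J_\tau$ for an appropriate $\tau\in O(\mathbb{E},\phi)$, and finally to verify the discriminant identity by a reduced-norm computation. A quaternion $F$-algebra in characteristic $2$ admits a presentation by generators $e_1,e_2$ with relations $e_1^2=a$, $e_2^2=b$, $e_1e_2+e_2e_1=1$ for some $a,b\in F$. These are the defining relations of the Clifford algebra of the quadratic plane $(\mathbb{E},\phi)$ with symplectic basis $\{e_1,e_2\}$ and $\phi(e_1)=a$, $\phi(e_2)=b$, so I identify $Q=C(\mathbb{E})$ with $\mathbb{E}=Fe_1+Fe_2$. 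The Clifford relation $e^2=\phi(e)\in F$ on $\mathbb{E}$ combined with $\gamma(e)e=e^2$ forces $\gamma(e)=e$ on $\mathbb{E}$, and since two anti-automorphisms of $C(\mathbb{E})$ agreeing on the generating subspace $\mathbb{E}$ coincide, $\gamma=J_{\id_\mathbb{E}}$. Hence if $\sigma$ is symplectic, uniqueness of the symplectic involution yields $\sigma=\gamma=J_{\id_\mathbb{E}}$, forcing $\tau=\id$.

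If $\sigma$ is orthogonal, I write $\sigma=\inn(u)\circ\gamma$ with $u\in\sym(Q,\gamma)\setminus F$. A direct dimension count (using $\gamma|_\mathbb{E}=\id$ and $\gamma(e_1e_2)=1+e_1e_2\notin\sym$) shows $\sym(Q,\gamma)=F\oplus\mathbb{E}$, and since $\inn(\cdot)$ is insensitive to $F$-scalars I may assume $u\in\mathbb{E}$. The Clifford relation $uv+vu=b(u,v)\in F$ for $u,v\in\mathbb{E}$ then gives, for every $v\in\mathbb{E}$,
\[
\sigma(v)=uvu^{-1}=v+\tfrac{b(v,u)}{\phi(u)}\,u=\tau_u(v),
\]
where I used $u^{-1}=u/\phi(u)$ (which comes from $u^2=\phi(u)$). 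Thus $\sigma$ stabilizes $\mathbb{E}$ and restricts to the reflection $\tau_u$; matching on the generators $e_1,e_2$ gives $\sigma=J_{\tau_u}$ on all of $C(\mathbb{E})=Q$.

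For the discriminant identity, the calculation above shows $v+\tau_u(v)=\frac{b(v,u)}{\phi(u)}u$, so $u\in\alt(Q,J_{\tau_u})\cap Q^\times$; and from $\gamma(u)=u$ together with $u^2=\phi(u)$ one gets $\nrd_Q(u)=u\gamma(u)=u^2=\phi(u)$. Therefore $\disc J_{\tau_u}=\nrd_Q(u)F^{\times 2}=\phi(u)F^{\times 2}=\theta(\tau_u)$, the last equality being the standard formula for the spinor norm of a reflection. The main obstacle is the orthogonal case, specifically verifying that $\inn(u)$ stabilizes $\mathbb{E}$ and acts as $\tau_u$: this hinges on the Clifford commutator identity $uv+vu=b(u,v)$, which is immediate from the defining relations but is the crucial bridge between the algebraic datum $(Q,\sigma)$ and the quadratic-form datum $(\mathbb{E},\tau)$.
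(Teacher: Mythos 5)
The paper itself gives no proof of this proposition --- it is quoted from \cite{mahmoudi} --- so your argument must stand on its own, and most of it does: the presentation of $Q$ as $C(\mathbb{E})$, the identification $\gamma=J_{\id_\mathbb{E}}$ (though your step ``$\gamma(e)e=e^2$ forces $\gamma(e)=e$'' is unjustified for isotropic $e$; cleaner is to note that $e^2=\phi(e)\in F$ and $e\notin F$ give minimal polynomial $X^2-\phi(e)$, hence $\mathrm{Trd}(e)=0$ and $\gamma(e)=\mathrm{Trd}(e)-e=e$), the computation $uvu^{-1}=\tau_u(v)$, and the discriminant identity are all fine. The genuine gap is the sentence ``since $\inn(\cdot)$ is insensitive to $F$-scalars I may assume $u\in\mathbb{E}$.'' The element $u$ with $\sigma=\inn(u)\circ\gamma$ is determined only up to \emph{multiplication} by $F^{\times}$, whereas $\sym(Q,\gamma)=F\oplus\mathbb{E}$ is an \emph{additive} decomposition: if $u=\alpha+e$ with $\alpha\neq0$ and $0\neq e\in\mathbb{E}$, then no scalar multiple $\lambda u=\lambda\alpha+\lambda e$ lies in $\mathbb{E}$, and $\inn(u)\neq\inn(e)$. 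The failure is not cosmetic. Over $F=\mathbb{F}_2(t)$ take $\phi(xe_1+ye_2)=x^2+xy+y^2$, so that $C(\mathbb{E})\simeq M_2(F)$ since $\phi$ represents $1$; a degree-parity argument shows $\phi$ represents no element of $tF^{\times2}$, so by your own (correct) discriminant computation every $J_{\tau_w}$ with $w\in\mathbb{E}$ anisotropic has $\disc J_{\tau_w}=\phi(w)F^{\times2}\neq tF^{\times2}$ --- yet $M_2(F)$ carries the orthogonal involution $T_t$ with $\disc T_t=tF^{\times2}$. Hence for your fixed plane the reduction to $u\in\mathbb{E}$ is simply false, and as written your proof covers only those orthogonal $\sigma$ whose discriminant is represented by the particular form $\phi$ you started with.

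The missing idea is that the quadratic plane must be built around $u$, not fixed in advance from a presentation of $Q$ alone. Since $u\in\sym(Q,\gamma)\setminus F$ is a unit with $\mathrm{Trd}(u)=0$, one has $u^2=\nrd(u)\in F^{\times}$. The trace-zero space $\sym(Q,\gamma)$ is $3$-dimensional while $C_Q(u)=F[u]=F+Fu$ meets it in a $2$-dimensional subspace, so choose a trace-zero $v_0\notin F[u]$; then $uv_0+v_0u=uv_0+\gamma(uv_0)=\mathrm{Trd}(uv_0)\in F$, and it is nonzero precisely because $v_0\notin C_Q(u)$. Rescaling gives $v$ with $uv+vu=1$ and $v^2=\nrd(v)\in F$. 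Now $\mathbb{E}'=Fu+Fv$ is a quadratic plane with $\phi'(xu+yv)=x^2u^2+xy+y^2v^2$, the induced map $C(\mathbb{E}')\to Q$ is an isomorphism (both are $4$-dimensional and $C(\mathbb{E}')$ is simple), and $u$ is anisotropic in $\mathbb{E}'$ because $\phi'(u)=\nrd(u)\neq0$. Your computation then applies verbatim with $\mathbb{E}'$ in place of $\mathbb{E}$: $\sigma|_{\mathbb{E}'}=\tau_u$, $\sigma=J_{\tau_u}$, and $\disc J_{\tau_u}=\nrd(u)F^{\times2}=\phi'(u)F^{\times2}=\theta(\tau_u)$. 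With this replacement the proof is complete; the ``necessity'' assertions follow as you indicate, since the only involutions in $O(\mathbb{E},\phi)$ are $\id$ and reflections, $J_{\id}=\gamma$ is symplectic, and $J_{\tau_u}$ is orthogonal by Proposition \ref{orthog}.
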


\begin{prop}\label{cint}{\rm(\cite[(6.10)]{mahmoudi})}
Let $(\mathbb{A},q)$ be a $4$-dimensional quadratic space over a field $F$ of characteristic $2$ and let $\tau$ be an interchange isometry of $(\mathbb{A},q)$.
Then $(C(\mathbb{A}),J_\tau)\simeq(C(\mathbb{E}_1),J_{\tau_1})\otimes(C(\mathbb{E}_2),J_{\tau_2})$ where
$(\mathbb{E}_i,\phi_i)$ is a suitable quadratic plane over $F$, $\tau_i$ is a reflection of $(\mathbb{E}_i,\phi_i)$ and $\theta(\tau_i)=1\in F^\times/F^{\times2}$, $i=1,2$.
\end{prop}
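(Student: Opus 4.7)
The plan is to exhibit the tensor factorisation by first normalising $\tau$ in an adapted basis of $\mathbb{A}$ and then constructing commuting quaternion subalgebras of $C(\mathbb{A})$ preserved by $J_\tau$. Since $\fix(\mathbb{A},\tau)$ is a two-dimensional totally isotropic subspace of the regular space $(\mathbb{A},q)$, the space $\mathbb{A}$ is hyperbolic. Picking a basis $\{u_1,u_2\}$ of $\fix(\mathbb{A},\tau)$ and completing it to a symplectic basis $\{u_1,v_1,u_2,v_2\}$ of $\mathbb{A}$ with $b(u_i,v_j)=\delta_{ij}$ and $q(u_i)=q(v_i)=0$, one checks that $\tau(u_i)=u_i$ together with the isometry property and $\tau^2=\id$ force
\[
\tau(v_1)=v_1+\alpha u_2,\qquad \tau(v_2)=v_2+\alpha u_1
\]
for a unique scalar $\alpha\in F^\times$ (non-vanishing because $\tau\neq\id$). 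This is the normal form I would work with.

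The key step is a tensor factorisation of $(C(\mathbb{A}),J_\tau)$. The naive subalgebras $\langle u_1,v_1\rangle$ and $\langle u_2,v_2\rangle$ of $C(\mathbb{A})$ are commuting quaternion subalgebras whose tensor product is $C(\mathbb{A})$, but neither is preserved by $J_\tau$. The plan is to \emph{twist} the generators $v_i$ by $J_\tau$-fixed elements of $C(\mathbb{A})$: a direct computation shows that the elements $u_1u_2$, $v_1u_2$ and $v_2u_1$ are all $J_\tau$-fixed, and an appropriate combination of such corrections should produce new commuting quaternion subalgebras $Q_1,Q_2$ of $C(\mathbb{A})$ that are $J_\tau$-stable and still satisfy $Q_1\otimes Q_2=C(\mathbb{A})$.

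Once such $Q_1,Q_2$ are in hand, Proposition~\ref{clif} identifies each $(Q_i,J_\tau|_{Q_i})$ with $(C(\mathbb{E}_i),J_{\tau_i})$ for a quadratic plane $(\mathbb{E}_i,\phi_i)$ and an involution $\tau_i\in O(\mathbb{E}_i,\phi_i)$. Verifying that $J_\tau|_{Q_i}$ is an orthogonal involution different from the identity then forces $\tau_i$ to be a reflection, again by Proposition~\ref{clif}. The last clause of that proposition gives $\theta(\tau_i)=\disc(J_\tau|_{Q_i})$, so the spinor norm condition $\theta(\tau_i)=1$ reduces to exhibiting an alternating unit of $(Q_i,J_\tau|_{Q_i})$ whose reduced norm lies in $F^{\times 2}$; the construction of the twist should be arranged to make this last verification automatic.

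The main obstacle is the construction of the twist. A purely linear twist $v_i\mapsto v_i+\beta u_j$ fails: the image $J_\tau(v_i+\beta u_j)=v_i+(\alpha+\beta)u_j$ always retains a $u_j$-term that cannot be absorbed inside the four-dimensional subalgebra $\langle u_i,v_i+\beta u_j\rangle$. One must therefore introduce corrections of strictly higher Clifford degree, and the technical work lies in simultaneously ensuring that the twisted pairs of generators still satisfy the quaternion relations, that the two subalgebras commute, and that each remains $J_\tau$-invariant with trivial discriminant.
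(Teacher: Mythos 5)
There is a genuine gap: your text is a plan, not a proof, and the plan stalls exactly at the step that carries all the content. Your normalization is correct --- since $\fix(\mathbb{A},\tau)$ is a maximal totally isotropic subspace of a regular $4$-dimensional space, $(\mathbb{A},q)$ is hyperbolic, and one checks as you say that in a symplectic basis $\{u_1,v_1,u_2,v_2\}$ with $q(u_i)=q(v_i)=0$ and $\fix(\mathbb{A},\tau)=Fu_1+Fu_2$ one is forced to $\tau(v_1)=v_1+\alpha u_2$, $\tau(v_2)=v_2+\alpha u_1$ with $\alpha\neq0$ (and rescaling $(u_2,v_2)\mapsto(\alpha u_2,\alpha^{-1}v_2)$ even normalizes $\alpha=1$); your list of $J_\tau$-fixed elements is also correct. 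But the proposition \emph{is} the existence of two commuting $J_\tau$-stable quaternion subalgebras with $\theta(\tau_i)=1$, and at precisely this point you write that an appropriate twist ``should produce'' them and concede in your final paragraph that this construction is ``the main obstacle.'' (Note also that the paper itself contains no proof to fall back on: the statement is imported from the companion preprint \cite[(6.10)]{mahmoudi}.) The obstacle is genuine and your fixed elements do not obviously resolve it: for instance the stability problem \emph{can} be solved with a degree-three correction, since $J_\tau(u_1u_2v_1)=u_1u_2v_1+u_2$ makes $\tilde v_1=v_1+\alpha u_1u_2v_1$ a $J_\tau$-fixed element, but then the quaternion relation breaks, because $u_1\tilde v_1+\tilde v_1u_1=1+\alpha u_1u_2\notin F$, so $u_1$ and $\tilde v_1$ generate more than a quaternion algebra. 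Stability and the quaternion relations pull against each other, nothing in your outline arbitrates between them, and the final condition $\theta(\tau_i)=1$ is likewise only asserted to come out ``automatically.''

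Here is one way to finish from your normal form without any twisting. Let $U=\fix(\mathbb{A},\tau)$ and represent $C(\mathbb{A})$ faithfully on $\Lambda U$, with $u_i$ acting by exterior multiplication and $v_i$ by contraction against the dual basis; in the ordered basis $(1,u_1,u_2,u_1\wedge u_2)$ this gives $C(\mathbb{A})\simeq M_4(F)$. Since $J_\tau$ is of the first kind, $J_\tau=\ad_g$ for a symmetric Gram matrix $g$ determined by the intertwining equations $x^tg=g\tau(x)$ for $x\in\{u_1,u_2,v_1,v_2\}$; solving this linear system (with $\alpha=1$) yields, up to a scalar,
\begin{equation*}
g=\begin{pmatrix}1&0&0&1\\0&0&1&0\\0&1&0&0\\1&0&0&0\end{pmatrix},
\end{equation*}
which is nonalternating (as it must be, by (\ref{orthog})). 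It diagonalizes to $\langle1,1,1,1\rangle$: the vector $e_1$ has norm $1$, its orthogonal complement is $\langle e_1+e_4\rangle$ (norm $1$) orthogonal to the alternating plane on $e_2,e_3$, and in characteristic $2$ one has $\langle a\rangle\perp\bigl(\begin{smallmatrix}0&1\\1&0\end{smallmatrix}\bigr)\simeq\langle a,a,a\rangle$. Hence $(C(\mathbb{A}),J_\tau)\simeq(M_4(F),t)\simeq(M_2(F),t)\otimes(M_2(F),t)$, and Remark (\ref{sp}) --- which does not depend on the present proposition, so there is no circularity --- converts each factor into $(C(\mathbb{E}_i),J_{\tau_i})$ with $\tau_i$ a reflection and $\theta(\tau_i)=1\in F^\times/F^{\times2}$, as required. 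If you prefer to keep your subalgebra-construction strategy, you must exhibit the generators explicitly (e.g.\ by pulling back the standard Kronecker factors of $(M_4(F),t)$ through the change of basis diagonalizing $g$) and then verify the discriminant condition via Remark (\ref{alt}); as it stands, the heart of the argument is missing.
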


\begin{prop}\label{orthog}{\rm(\cite[(4.7)]{mahmoudi})}
Let $(V,q)$ be a quadratic space over a field $F$ of characteristic $2$ and let $\tau$ be an involution in $O(V,q)$.
Then the involution $J_\tau$ on $C(V)$ is of orthogonal type if and
only if $(V,\tau)$ has trivial maximal fixed orthogonal summand if and
only if $\dim \fix(V,\tau)=\frac{1}{2}\dim V$.
\end{prop}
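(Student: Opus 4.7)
\medskip

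\noindent\textbf{Proof proposal.} The plan is to fix a Wiitala decomposition
$$V = W \perp \mathbb{E}_1 \perp \cdots \perp \mathbb{E}_r \perp \mathbb{A}_1 \perp \cdots \perp \mathbb{A}_s$$
of $(V,\tau)$ and reduce both equivalences to computations on its summands. Since the pieces are of controlled types (identity, reflection, interchange isometry) and the last two have been treated in Propositions \ref{clif} and \ref{cint}, the proof should reduce to combining this local information.

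First I would settle the equivalence between the triviality of the maximal fixed orthogonal summand and the condition $\dim\fix(V,\tau)=\tfrac12\dim V$ by a direct dimension count on the decomposition above. On $W$ the restriction is the identity, so its fixed locus has dimension $\dim W$; on each $\mathbb{E}_i$ the restriction is a reflection, whose fixed locus is a line; on each $\mathbb{A}_j$ the restriction is an interchange isometry, whose fixed locus is $2$-dimensional by definition. Hence $\dim\fix(V,\tau)=\dim W+r+2s$ while $\dim V=\dim W+2r+4s$, so $\dim\fix(V,\tau)=\tfrac12\dim V$ if and only if $\dim W=0$.

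For the equivalence with the type of $J_\tau$ I would transport the orthogonal decomposition to the Clifford side, writing $(C(V),J_\tau)$ as a tensor product of the factors $(C(W),J_{\id_W})$, $(C(\mathbb{E}_i),J_{\tau_i})$ and $(C(\mathbb{A}_j),J_{\tau_j})$. By Proposition \ref{clif} each $(C(\mathbb{E}_i),J_{\tau_i})$ is a quaternion algebra with an orthogonal involution, and by Proposition \ref{cint} each $(C(\mathbb{A}_j),J_{\tau_j})$ further breaks into a tensor product of two such orthogonal quaternion involutions. On the other hand, if $W\neq 0$ then a symplectic pair $\{u,v\}\subset W$ gives in $C(W)$ the identity $uv+J_{\id_W}(uv)=uv+vu=b(u,v)=1$, so $1\in\alt(C(W),J_{\id_W})$ and $J_{\id_W}$ is symplectic. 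Since a tensor product containing a symplectic factor is again symplectic (tensor the relation $1=z+J_{\id_W}(z)$ with $1$'s in the other slots), we conclude: if $W\neq 0$ then $J_\tau$ is symplectic, while if $W=0$ then $(C(V),J_\tau)$ is a tensor product of orthogonal quaternion involutions, which I claim is orthogonal.

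The main obstacle is the two structural facts one has to justify carefully. First, passing from an orthogonal decomposition of $V$ to an \emph{ungraded} tensor product decomposition of $(C(V),J_\tau)$: a priori only the $\mathbb{Z}/2$-graded tensor product is available, and one must exploit that each $\mathbb{E}_i$, $\mathbb{A}_j$, and (since $\dim V$ is even) also $W$ is of even dimension in order to obtain an ordinary tensor product that is compatible with the induced involution. Second, the statement that a tensor product of orthogonal involutions in characteristic $2$ remains orthogonal; this is not automatic and must be checked by verifying $1\notin\alt$ on the tensor product, for which a concrete description of symmetric and alternating elements in a quaternion factor is needed.
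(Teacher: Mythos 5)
The paper does not prove this proposition: it is quoted verbatim from \cite[(4.7)]{mahmoudi}, so there is no in-paper argument to compare yours against (the closest internal analogue is the proof of Proposition \ref{trans}, which runs the same pattern of Wiitala decomposition plus local analysis of the factors). Judged on its own, your plan is correct. The dimension count $\dim\fix(V,\tau)=\dim W+r+2s$ against $\dim V=\dim W+2r+4s$ is right (in characteristic $2$ a reflection $\tau_u$ on a regular plane fixes exactly the line $Fu$, since $b(u,u)=2q(u)=0$), and it has the bonus of showing $\dim W=2\dim\fix(V,\tau)-\dim V$, which settles the implicit well-definedness point that triviality of $W$ does not depend on the choice of Wiitala decomposition. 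Your symplectic-pair computation $uv+J_{\id_W}(uv)=uv+vu=b(u,v)=1$ correctly exhibits $1\in\alt(C(W),J_{\id_W})$ when $W\neq0$, and tensoring with $1$'s in the other slots propagates this to $(C(V),J_\tau)$.

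Of your two flagged obstacles, both are genuine checkpoints but easier than you fear. (i) In characteristic $2$ the polar form is alternating, so orthogonal vectors \emph{commute} in $C(V)$: for $x\in V_1$, $y\in V_2$ with $V_1\perp V_2$ one has $xy+yx=b(x,y)=0$. Hence the subalgebras generated by the summands commute elementwise, multiplication gives an algebra homomorphism $C(V_1)\otimes C(V_2)\to C(V)$ compatible with the involutions, and since every summand in the decomposition (including $W$, being a regular orthogonal summand) is even-dimensional and regular, each Clifford factor is central simple; injectivity then follows and a dimension count makes the map an isomorphism, so no graded tensor product ever enters. (ii) That a tensor product of orthogonal involutions in characteristic $2$ is orthogonal (equivalently: symplectic if and only if some factor is symplectic) is exactly \cite[(2.23)]{knus}, which this paper itself invokes in the proof of Theorem \ref{pfister}; alternatively, the computation $\alt(C(\mathbb{E}),J_{\tau_u})=Fu$ of Remark \ref{alt} shows $1\notin\alt$ directly for each quaternion factor. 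One small word of care: Proposition \ref{clif} as stated gives ``$J_\tau$ symplectic $\Rightarrow\tau=\id$''; since an involution of the first kind on a quaternion algebra is either orthogonal or symplectic and a reflection is not the identity, it is the contrapositive that yields the direction you actually use, namely that a reflection induces an orthogonal involution.
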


\section{Involutions on split Clifford algebra in low dimensions}\label{sec-clif}
Let $F$ be a field of characteristic $2$ and let $\alpha\in F^\times$.
Consider the involution $T_\alpha:M_2(F)\rightarrow M_2(F)$ defined by
{\setlength\arraycolsep{2pt}
\begin{eqnarray*}
T_\alpha\left(\begin{array}{cc}a & b \\c & d\end{array}\right)=\left(\begin{array}{cc}a & c\alpha^{-1} \\b\alpha & d\end{array}\right).
\end{eqnarray*}}
In particular $T_1=t$.
Note that $T_\alpha$ is, up to isomorphism, the unique involution of orthogonal type on $M_2(F)$ such that  $\disc T_\alpha=\alpha F^{\times2}\in F^\times/F^{\times2}$ (\cite[(7.4)]{knus}).

\begin{rem}\label{sp}
Let $(\mathbb{E},\phi)$ be a quadratic plane over a field $F$ of characteristic $2$.
Then $C(\mathbb{E})$ splits if and only if $\phi$ represents $1$, see \cite[(9.6 (2))]{elman}, \cite[(11.2 (4))]{elman} and \cite[(12.5)]{elman}.
It follows that $(C(\mathbb{E}),J_\tau)\simeq(M_2(F),t)$ if and only if $\tau$ is a reflection and $\theta(\tau)=1\in F^\times/F^{\times2}$.
More generally, if $C(\mathbb{E})$ splits then
$(C(\mathbb{E}),J_\tau)\simeq(M_2(F),T_\alpha)$ if and only if $\tau$
is a reflection and $\theta(\tau)=\alpha F^{\times2}$, also
$(C(\mathbb{E}),J_{\id})\simeq(M_2(F),\gamma)$.
\end{rem}

The proof of the following simple observation is left to the reader.

\begin{lem}\label{totimes}
Let $F$ be a field of characteristic $2$ and let $A\in M_n(F)$ such that $A^t=A$ and $A^2\in F$. Then $A^2\in F^2$.
\end{lem}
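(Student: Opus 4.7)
The plan is to extract the scalar $\alpha$ for which $A^2 = \alpha I$ by reading off a single diagonal entry of $A^2$, and then exploit the fact that in characteristic $2$ the Frobenius map is additive to rewrite that entry as a square.

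Concretely, write $A = (a_{ij})$. Symmetry $A^t = A$ gives $a_{ij} = a_{ji}$ for all $i,j$. Let $\alpha \in F$ be the scalar with $A^2 = \alpha I_n$. Then the $(1,1)$-entry of $A^2$ equals $\alpha$, that is
\[
\alpha = \sum_{k=1}^{n} a_{1k} a_{k1} = \sum_{k=1}^{n} a_{1k}^2.
\]
Since $\car F = 2$, the Frobenius $x \mapsto x^2$ is additive on $F$, so
\[
\sum_{k=1}^{n} a_{1k}^2 = \Bigl(\sum_{k=1}^{n} a_{1k}\Bigr)^2,
\]
and therefore $\alpha \in F^2$, which is the desired conclusion.

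There is no real obstacle here: once one notices that symmetry turns the diagonal entries of $A^2$ into a sum of squares $\sum a_{ik}^2$, the additivity of Frobenius in characteristic $2$ immediately packages this sum as a single square. (The hypothesis $A^2 \in F$ is only used to identify this diagonal entry with the scalar whose squareness we want to establish; any fixed diagonal index $i$ would work equally well.)
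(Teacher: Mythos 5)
Your proof is correct: symmetry turns the $(1,1)$-entry of $A^2$ into $\sum_k a_{1k}^2$, which the additivity of the Frobenius map in characteristic $2$ rewrites as $\bigl(\sum_k a_{1k}\bigr)^2$, so the scalar $A^2$ lies in $F^2$. The paper explicitly leaves this lemma's proof to the reader, and your one-line diagonal-entry computation is exactly the intended ``simple observation.''
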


\begin{prop}\label{trans}
Let $(V,q)$ be a quadratic space over a field $F$ of characteristic
$2$ and let $\tau\in O(V,q)$ be an involution.
Then $(C(V),J_\tau)\simeq(M_{2^n}(F),t)$ if and only if $\dim\fix(V,\tau)=\frac{1}{2}\dim V$ and $q(x)\in F^2$ for every $x\in\fix(V,\tau)$.
\end{prop}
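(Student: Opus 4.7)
The plan is to treat the two implications of the biconditional separately, relying on the structural results already assembled in the excerpt.

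For $(\Leftarrow)$, I would begin by fixing a Wiitala decomposition $V = W \perp \mathbb{E}_1 \perp \cdots \perp \mathbb{E}_r \perp \mathbb{A}_1 \perp \cdots \perp \mathbb{A}_s$ of $(V,\tau)$. A direct count of fixed vectors gives $\dim \fix(V,\tau) = \dim W + r + 2s$ while $\dim V = \dim W + 2r + 4s$, so the dimension hypothesis forces $W = 0$. Since Clifford algebras convert orthogonal sums into ordinary tensor products in characteristic~$2$ (the graded signs vanish), one obtains
$$(C(V),J_\tau) \simeq \bigotimes_{i=1}^{r}(C(\mathbb{E}_i),J_{\tau|_{\mathbb{E}_i}}) \otimes \bigotimes_{j=1}^{s}(C(\mathbb{A}_j),J_{\tau|_{\mathbb{A}_j}}).$$
Proposition \ref{cint} expresses each interchange factor as a tensor product of two copies of $(C(\mathbb{E}),J_{\tau'})$ with $\tau'$ a reflection of trivial spinor norm, so Remark \ref{sp} identifies each such factor with $(M_2(F),t)$. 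For a reflection factor $(C(\mathbb{E}_i),J_{\tau_{u_i}})$ the axis $u_i$ lies in $\fix(V,\tau)$, hence the hypothesis forces $q(u_i) \in F^2$, the spinor norm $\theta(\tau_{u_i}) = q(u_i)F^{\times 2}$ is trivial, and Remark \ref{sp} again delivers $(M_2(F),t)$. The tensor product of the resulting $n$ copies (with $\dim V = 2n$) is $(M_{2^n}(F),t)$, since both sides realise the adjoint involution of the diagonal form $\langle 1,\ldots,1\rangle$.

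For $(\Rightarrow)$, the transpose involution is of orthogonal type, hence so is $J_\tau$, and Proposition \ref{orthog} delivers $\dim\fix(V,\tau) = \frac{1}{2}\dim V$. For any $x \in \fix(V,\tau)$ the corresponding element of $C(V)$ is symmetric for $J_\tau$ and satisfies $x^2 = q(x) \in F$. Transported to $(M_{2^n}(F),t)$ it becomes a symmetric matrix $A$ with $A^2 \in F$, so Lemma \ref{totimes} yields $q(x) = A^2 \in F^2$.

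The main obstacle lies in the forward direction: one must convert the scalar hypothesis $q(x) \in F^2$ on $\fix(V,\tau)$ into information about the spinor norms of the reflections that appear in the Wiitala decomposition. The alignment succeeds precisely because the reflection axes automatically belong to $\fix(V,\tau)$, so the hypothesis applies to them directly; without this observation a considerably more intricate spinor-norm analysis would be needed to recognise each reflection factor as $(M_2(F),t)$.
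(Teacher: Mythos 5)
Your proof is correct and follows essentially the same route as the paper: the forward direction uses the orthogonality of $t$ together with Lemma \ref{totimes} on symmetric matrices, and the converse combines the Wiitala decomposition with Propositions \ref{cint} and \ref{orthog} and Remark \ref{sp}. The only cosmetic difference is that you re-derive the vanishing of $W$ by an explicit dimension count (and spell out the char-$2$ tensor splitting of the Clifford algebra), where the paper simply invokes Proposition \ref{orthog}.
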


\begin{proof}
Since the involution $t$ is of orthogonal type, if $f:(C(V),J_\tau)\simeq(M_{2^n}(F),t)$ is an isomorphism, then by (\ref{orthog}), we have $\dim\fix(V,\tau)=\frac{1}{2}\dim V$.
Let $x\in \fix(V,\tau)$, i.e., $\tau(x)=x$ and set $A=f(x)\in M_{2^n}(F)$.
Then $A^2=f(x)^2=q(x)\in F$ and $A^t=A$, so by (\ref{totimes}), $A^2\in F^2$, i.e., $q(x)=x^2\in F^2$.

Conversely suppose that $\dim\fix(V,\tau)=\frac{1}{2}\dim V$ and $q(x)\in F^2$ for every $x\in\fix(V,\tau)$.
Then by (\ref{orthog}), $(V,\tau)$ has trivial maximal fixed orthogonal summand, so $(V,\tau)$ has a Wiitala decomposition as $V=\mathbb{E}_1\perp\cdots\perp\mathbb{E}_r\perp \mathbb{A}_1\perp\cdots\perp \mathbb{A}_s$.
By (\ref{cint}) and (\ref{sp}), we have $(C(\mathbb{A}_i),J_{\tau|_{\mathbb{A}_i}})\simeq(M_4(F),t)$, $i=1,\cdots,s$.
Also since $q(x)\in F^2$ for every $x\in\fix(V,\tau)$, we obtain $\theta(\tau|_{\mathbb{E}_i})=1\in F^{\times}/F^{\times2}$, $i=1,\cdots,r$.
So by (\ref{sp}) we have $(C(\mathbb{E}_i),J_{\tau|_{\mathbb{E}_i}})\simeq(M_2(F),t)$, $i=1,\cdots,r$.
This completes the proof.
\end{proof}

\begin{rem}\label{int4}
Let $(\mathbb{A},q)$ be a four dimensional quadratic space over a field $F$ of characteristic $2$ and let $\tau$ be an involution in $O(\mathbb{A},q)$.
Then by (\ref{orthog}),  we have $\dim\fix(\mathbb{A},\tau)=2$ if and only if either $\tau$ is an interchange isometry or $\tau=\tau_1\perp\tau_2$ is an orthogonal sum of two reflections.
\end{rem}

\begin{lem}\label{theta}
For $i=1,\cdots,n$, let $(\mathbb{E}_i,\phi_i)$ be a quadratic plane over a field $F$ of characteristic $2$ and let $\tau_i$ be a reflection of $(\mathbb{E}_i,\phi_i)$.
Set $(V,q)=(\mathbb{E}_1,\phi_1)\perp\cdots\perp(\mathbb{E}_n,\phi_n)$ and $\tau=\tau_1\perp\cdots\perp\tau_n$.
Then we have $\theta(\tau_i)=1\in F^\times/F^{\times2}$ for every $i=1,\cdots,n$, if and only if $q(x)\in F^2$ for every $x\in\fix(V,\tau)$.
\end{lem}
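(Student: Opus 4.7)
\medskip

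\noindent\textbf{Proof proposal.} The plan is to reduce the statement, piece by piece, to the definition of the spinor norm of a reflection. First I would parametrize each reflection: for every $i$, write $\tau_i=\tau_{u_i}$ for some anisotropic $u_i\in\mathbb{E}_i$, so that $\theta(\tau_i)=\phi_i(u_i)F^{\times 2}$ in $F^\times/F^{\times 2}$.

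Next I would identify $\fix(\mathbb{E}_i,\tau_i)$ explicitly. From the defining formula $\tau_{u_i}(v)=v+\frac{b(v,u_i)}{\phi_i(u_i)}u_i$, the fixed space equals $u_i^{\perp}\subseteq\mathbb{E}_i$. The crucial characteristic-$2$ observation is that the associated symmetric bilinear form satisfies $b(u_i,u_i)=0$, so $u_i$ lies in its own orthogonal complement; combined with $\dim\mathbb{E}_i=2$ and the nondegeneracy of $b$, this forces $u_i^\perp=Fu_i$. Summing orthogonally over $i$ yields
\[
\fix(V,\tau)=\bigoplus_{i=1}^{n}Fu_i,
\]
so every $x\in\fix(V,\tau)$ has a unique expression $x=\sum_{i=1}^{n}\alpha_i u_i$ with
\[
q(x)=\sum_{i=1}^{n}\alpha_i^{2}\phi_i(u_i),
\]
using that the planes $\mathbb{E}_i$ are pairwise orthogonal in $V$.

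The final step is an application of the Frobenius. For the implication $(\Rightarrow)$, assume $\theta(\tau_i)=1\in F^\times/F^{\times 2}$ for all $i$, and write $\phi_i(u_i)=\beta_i^{2}$ with $\beta_i\in F$. Then
\[
q(x)=\sum_{i=1}^{n}(\alpha_i\beta_i)^{2}=\Bigl(\sum_{i=1}^{n}\alpha_i\beta_i\Bigr)^{2}\in F^{2},
\]
since squaring is additive in characteristic $2$. For the converse, specialize to $x=u_j\in\fix(V,\tau)$ to obtain $\phi_j(u_j)=q(u_j)\in F^{2}$, hence $\theta(\tau_j)=1$.

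There is no substantive obstacle here; the one point that genuinely requires characteristic $2$ is the identity $u_i^\perp=Fu_i$ inside the quadratic plane, which makes the fixed line of $\tau_i$ coincide with the span of the very vector defining the reflection, together with the additivity of squaring that collapses a sum of squares to a single square.
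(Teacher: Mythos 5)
Your proof is correct and takes essentially the same route as the paper's: write $\tau_i=\tau_{u_i}$, observe that $\{u_1,\cdots,u_n\}$ is an orthogonal basis of $\fix(V,\tau)$, and combine $\theta(\tau_{u_i})=q(u_i)F^{\times2}$ with the fact that squaring is additive in characteristic $2$. You simply make explicit two steps the paper leaves to the reader (that $u_i^\perp=Fu_i$ in each plane, and the collapse of $\sum\alpha_i^2\beta_i^2$ to a single square), which is a faithful elaboration rather than a different argument.
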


\begin{proof}
Choose an anisotropic vector $u_i\in\mathbb{E}_i$, $i=1,\cdots,n$, such that $\tau_i=\tau_{u_i}$.
Then $\{u_1,\cdots,u_n\}$ is an orthogonal basis of $\fix(V,q)$.
We have $q(x)\in F^2$ for every $x\in\fix(V,\tau)$ if and only if $q(u_i)\in F^2$ for every $i=1,\cdots,n$.
As $\theta(\tau_i)=q(u_i)F^{\times2}\in F^\times/F^{\times2}$, this is equivalent to the condition that $\theta(\tau_i)=1\in F^\times/F^{\times2}$ for every $i=1,\cdots,n$.
\end{proof}

\begin{cor}\label{tint}
Let $(\mathbb{A},q)$ be a $4$-dimensional quadratic space over a field $F$ of characteristic $2$ and let $\tau$ be an involution in $O(\mathbb{A},q)$.
Then $(C(\mathbb{A}),J_\tau)\simeq(M_4(F),t)$ if and only if either $\tau$ is an interchange isometry or $\tau=\tau_1\perp\tau_2$ is an orthogonal sum of two reflections $\tau_1$ and $\tau_2$ with $\theta(\tau_1)=\theta(\tau_2)=1\in F^\times/F^{\times2}$.
\end{cor}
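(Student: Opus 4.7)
The plan is to combine Proposition \ref{trans}, Remark \ref{int4} and Lemma \ref{theta} to reduce the statement to an immediate check. By Proposition \ref{trans} applied to $(V,q)=(\mathbb{A},q)$, the isomorphism $(C(\mathbb{A}),J_\tau)\simeq(M_4(F),t)$ is equivalent to the conjunction of the two conditions
\[
\dim\fix(\mathbb{A},\tau)=2\qquad\text{and}\qquad q(x)\in F^2\ \text{for every}\ x\in\fix(\mathbb{A},\tau).
\]
So the proof reduces to analyzing these two conditions in a $4$-dimensional space.

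First I would use Remark \ref{int4}, which tells us that the dimension condition $\dim\fix(\mathbb{A},\tau)=2$ is equivalent to saying that $\tau$ is either an interchange isometry or an orthogonal sum $\tau_1\perp\tau_2$ of two reflections. This splits the argument into two natural cases.

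In the interchange case, by definition $\fix(\mathbb{A},\tau)$ is a totally isotropic two-dimensional subspace, so $q(x)=0\in F^2$ for every $x\in\fix(\mathbb{A},\tau)$; the second condition of Proposition \ref{trans} is automatic, and the forward direction of the corollary holds without any extra hypothesis. In the reflection-sum case, Lemma \ref{theta} (applied with $n=2$) shows that $q(x)\in F^2$ on $\fix(\mathbb{A},\tau)$ is exactly the condition $\theta(\tau_1)=\theta(\tau_2)=1\in F^\times/F^{\times2}$.

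Combining the two cases yields both implications of the corollary. I do not expect any serious obstacle: the entire argument is a bookkeeping application of results already established, and the only mildly delicate point is to observe that in the interchange case the spinor-norm hypothesis disappears because the fixed subspace is totally isotropic.
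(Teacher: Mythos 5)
Your proposal is correct and takes essentially the same approach as the paper's own proof: both reduce the statement via Proposition \ref{trans} to the two conditions $\dim\fix(\mathbb{A},\tau)=2$ and $q(x)\in F^2$ for $x\in\fix(\mathbb{A},\tau)$, then invoke Remark \ref{int4} to split into the interchange and reflection-sum cases and Lemma \ref{theta} to translate the second condition into $\theta(\tau_1)=\theta(\tau_2)=1\in F^\times/F^{\times2}$. Your observation that in the interchange case $q$ vanishes on the totally isotropic fixed space is exactly the point the paper makes as well.
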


\begin{proof}
If $(C(\mathbb{A}),J_\tau)\simeq(M_4(F),t)$, then by (\ref{trans}) we have $\dim\fix(\mathbb{A},\tau)=2$ and $q(x)\in F^2$ for every $x\in\fix(\mathbb{A},\tau)$.
So by (\ref{int4}) either $\tau$ is an interchange isometry or $\tau=\tau_1\perp\tau_2$ is an orthogonal sum of two reflections.
In the latter case by (\ref{theta}) we have $\theta(\tau_1)=\theta(\tau_2)=1\in F^\times/F^{\times2}$.

Conversely if $\tau$ is an interchange isometry then $\dim\fix(\mathbb{A},\tau)=2$ and for every $x\in\fix(\mathbb{A},\tau)$ we have $q(x)=0\in F^2$, so (\ref{trans}) implies that $(C(\mathbb{A}),J_{\tau})\simeq(M_4(F),t)$.
Finally, if $\tau=\tau_1\perp\tau_2$ is an orthogonal sum of two reflections, then $\dim\fix(\mathbb{A},\tau)=2$.
Also since $\theta(\tau_1)=\theta(\tau_2)=1\in F^\times/F^{\times2}$,  by (\ref{theta}) we have $q(x)\in F^2$ for every $x\in\fix(\mathbb{A},\tau)$.
Therefore (\ref{trans}) implies that $(C(\mathbb{A}),J_{\tau})\simeq(M_4(F),t)$.
\end{proof}

\section{Split products of quaternion algebras with involution}\label{sec-pfister}

\begin{rem}\label{alt}
Let $(\mathbb{E},\phi)$ be a quadratic plane over a field $F$ of characteristic $2$ and let $u\in\mathbb{E}$ be an anisotropic vector.
Extend $\{u\}$ to a symplectic basis $\{u,v\}$ of $\mathbb{E}$ and set $\tau=\tau_u$.
Then $J_\tau(v)=v+\frac{1}{\phi(u)}u$.
So $\alt(C(\mathbb{E}),J_\tau)=Fu$.
We also have $u^2=q(u)\in F^\times$.
In particular by (\ref{clif}), if $(Q,\sigma)$ is a quaternion algebra with involution of ortho\-gonal type over $F$ and $r\in\alt(Q,\sigma)$, then $r^2\in F^\times$.
Note that we also have $\disc\sigma=\alpha F^{\times2}\in F^\times/F^{\times2}$ where $\alpha=r^2\in F^\times$.
\end{rem}

\begin{lem}\label{b}
Let $K/F$ be an extension of fields of characteristic $2$ and let $\sigma$ be an involution of orthogonal type on $M_2(K)$ with $\disc\sigma=\alpha K^{\times2}$, $\alpha\in F^\times$.
Then there is a nonalternating symmetric bilinear form $(W,b)$ over $K$ with an orthogonal basis $\{u,v\}$ such that $(M_2(K),\sigma)\simeq(\End_K(W),\ad_b)$, $b(u,u)=1\in F^\times$ and $b(v,v)=\alpha\in F^\times$.
\end{lem}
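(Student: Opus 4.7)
The plan is to exhibit an explicit bilinear form $(W,b)$ with $b(u,u)=1$ and $b(v,v)=\alpha$, to verify that $\ad_b$ is an orthogonal involution on $\End_K(W)$ with discriminant $\alpha K^{\times 2}$, and then to invoke the uniqueness (up to isomorphism) of orthogonal involutions on $M_2(K)$ with given discriminant, as noted just after the definition of $T_\alpha$.

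First I would set $W=K^2$ with basis $\{u,v\}$ and let $b$ be the symmetric bilinear form with Gram matrix $G=\mathrm{diag}(1,\alpha)$, so that $b(u,u)=1$, $b(v,v)=\alpha$, $b(u,v)=0$. Since $\alpha\in F^\times\subset K^\times$, the form $b$ is nondegenerate, and since $b(u,u)=1\neq 0$ it is nonalternating; moreover the prescribed values $1,\alpha$ lie in $F^\times$ by construction. Because $b$ is nonalternating symmetric, the adjoint involution $\ad_b$ on $\End_K(W)\simeq M_2(K)$ is of orthogonal type.

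Next I would compute $\disc\ad_b$ by writing $\ad_b$ explicitly in the basis $\{u,v\}$: the defining identity $b(x,My)=b(\ad_b(M)x,y)$ gives $\ad_b(M)=G^{-1}M^{t}G$, so for $M=\bigl(\begin{smallmatrix} a & b \\ c & d\end{smallmatrix}\bigr)$ one obtains $\ad_b(M)=\bigl(\begin{smallmatrix} a & c\alpha \\ b\alpha^{-1} & d\end{smallmatrix}\bigr)$. Consequently the alternating elements have the form $\bigl(\begin{smallmatrix} 0 & x \\ x\alpha^{-1} & 0\end{smallmatrix}\bigr)$ with $x\in K$. Taking $x=1$ yields a unit with reduced norm $\alpha^{-1}$ (the determinant of an anti-diagonal matrix is the product of its off-diagonal entries in characteristic $2$). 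Since $n=2$, one has $\disc\ad_b=\nrd K^{\times 2}=\alpha^{-1}K^{\times 2}=\alpha K^{\times 2}$, the last equality because $\alpha\cdot\alpha^{-1}=1\in K^{\times 2}$.

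Finally, $\sigma$ and $\ad_b$ are two orthogonal involutions on $M_2(K)$ with the same discriminant $\alpha K^{\times 2}$, so by the uniqueness recalled after the definition of $T_\alpha$ they are isomorphic as algebras with involution. This gives the required $(M_2(K),\sigma)\simeq(\End_K(W),\ad_b)$. The only genuine obstacle is keeping the discriminant calculation straight in characteristic $2$; once the matrix description of $\ad_b$ is in hand, the identification is automatic.
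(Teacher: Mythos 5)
Your proposal is correct and follows essentially the same route as the paper: take $(W,b)=\langle1,\alpha\rangle$, note $\ad_b$ is orthogonal with $\disc\ad_b=\alpha K^{\times2}$, and conclude by the uniqueness of orthogonal involutions on $M_2(K)$ with prescribed discriminant from \cite[(7.4)]{knus}. The only difference is cosmetic: you verify $\disc\ad_b=\alpha K^{\times2}$ by an explicit matrix computation with an alternating unit, whereas the paper simply invokes $\disc\ad_b=\disc b$.
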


\begin{proof}
Let $(W,b)$ be the nonalternating bilinear form $\langle1,\alpha\rangle$ over $K$ with an orthogonal basis $\{u,v\}$.
Then $(\End_K(W),\ad_b)$ is an algebra with involution of orthogonal type.
Since the involution $\sigma$ is of orthogonal type and $\disc\sigma=\alpha K^{\times2}=\disc b=\disc \ad_b$, by (\cite[(7.4)]{knus}) we have $(M_2(K),\sigma)\simeq(\End_K(W),\ad_b)$.
Moreover we have $b(u,u)=1$ and $b(v,v)=\alpha$.
\end{proof}

\begin{rem}
Let $K/F$ be a field extension and let $W$ be a vector space over $K$.
Let $\{w_1,\cdots,w_n\}$ be a basis of $W$ and set $V=Fw_1+\cdots+Fw_n$.
Using the $F$-algebra monomorphism $\phi:\End_F(V)\hookrightarrow\End_{K\otimes F}(K\otimes V)$ defined by $\phi(f)=\id_K\otimes f$, we may consider $\End_F(V)$ as an $F$-subalgebra of $\End_K(W)$.
\end{rem}

The following result is implicitly contained in the proof of \cite[(3.4 (2))]{bayer}:
\begin{lem}\label{ad}
Let $K/F$ be a field extension and let $(W,b)$ be a nondegenerate
nonalternating symmetric bilinear space over $K$.
Let $\{w_1,\cdots,w_n\}$ be an orthogonal basis of $W$ and $V=Fw_1+\cdots+Fw_n$.
If $b(w_i,w_i)\in F$ for every $i=1,\cdots,n$, then $\ad_b(\End_F(V))=\End_F(V)$.
\end{lem}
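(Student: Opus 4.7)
The plan is to show directly that $\ad_b$ sends $\End_F(V)$ into itself; the equality then follows because $\ad_b$ is an involution, so the reverse inclusion comes from applying $\ad_b$ once more.

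First, I note that since $(W,b)$ is nondegenerate and $\{w_1,\dots,w_n\}$ is an orthogonal basis, the Gram matrix is diagonal, so we must have $\alpha_i := b(w_i,w_i)\neq 0$ for every $i$. By hypothesis, each $\alpha_i$ lies in $F^\times$.

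Next, let $f\in\End_F(V)$, and write $f(w_j)=\sum_i a_{ij}w_i$ with $a_{ij}\in F$. Set $g=\ad_b(f)\in\End_K(W)$ and write $g(w_i)=\sum_k g_{ki}w_k$ with $g_{ki}\in K$ a priori. Applying the defining property $b(g(w_i),w_j)=b(w_i,f(w_j))$ and using orthogonality of the basis yields
\[
g_{ji}\,\alpha_j \;=\; a_{ij}\,\alpha_i,
\]
so $g_{ji}=a_{ij}\alpha_i\alpha_j^{-1}\in F$. Hence $g(V)\subseteq V$ and $g\in\End_F(V)$, giving $\ad_b(\End_F(V))\subseteq\End_F(V)$.

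Finally, since $\ad_b$ is an involution on $\End_K(W)$, applying it to the inclusion just established yields $\End_F(V)=\ad_b(\ad_b(\End_F(V)))\subseteq\ad_b(\End_F(V))$, and we obtain the desired equality. There is no real obstacle here; the only point that needs care is justifying $\alpha_i\in F^\times$ (rather than merely in $F$), which uses both nondegeneracy of $b$ and the orthogonality of the chosen basis.
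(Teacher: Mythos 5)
Your proof is correct and follows essentially the same route as the paper's: computing $\ad_b(f)$ on the orthogonal basis via $b(\ad_b(f)(w_i),w_j)=b(w_i,f(w_j))$ to get the relation $g_{ji}\alpha_j=a_{ij}\alpha_i$ and concluding the matrix entries lie in $F$, with nondegeneracy plus orthogonality guaranteeing $\alpha_j\neq 0$. You are slightly more explicit than the paper in deducing equality from the inclusion by applying $\ad_b$ twice, a step the paper leaves implicit.
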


\begin{proof}
Let $f\in \End_F(V)$. We want to show that $\ad_b(f)\in \End_F(V)$.
It is enough to show that $\ad_b(f)(w_i)\in V$ for every $i=1,\cdots,n$.
Write $f(w_i)=\alpha_{1i}w_1+\cdots+\alpha_{ni}w_n$ where $\alpha_{1i},\cdots,\alpha_{ni}\in F$ and
write $\ad_b(f)(w_i)=\beta_{1i}w_1+\cdots+\beta_{ni}w_n$ where $\beta_{1i},\cdots,\beta_{ni}\in K$.
We claim that for every $i, j=1,\cdots,n$, $\beta_{ji}\in F$.
Since $b(w_i,f(w_j))=b(\ad_b(f)(w_i),w_j)$, we obtain
\begin{eqnarray}\label{1}
\alpha_{ij}b(w_i,w_i)=\beta_{ji}b(w_j,w_j).
\end{eqnarray}
The left side of (\ref{1}) belongs to $F$.
Also since $b$ is nondegenerate and $\{w_1,\cdots,w_n\}$ is an orthogonal basis of $W$, we have $b(w_k,w_k)\neq0$ for every $k=1,\cdots,n$.
So $\beta_{ji}\in F$ for every $i, j=1,\cdots,n$.
This completes the proof.
\end{proof}

\begin{lem}\label{discQ}
Let $(A,\sigma)$ be a central simple $F$-algebra with involution of
the first kind where $F$ is a field of arbitrary characteristic.
Let $B$ be a central simple $F$-subalgebra of $A$ such that $\sigma(B)=B$ and let $C=C_A(B)$ be the centralizer of $B$ in $A$.
Then $\alt(A,\sigma)\cap B=\alt(B,\sigma|_B)$ except for the case where $\car F=2$ and $(C,\sigma|_C)$ is symplectic.
In this exceptional case, this property does not hold.
\end{lem}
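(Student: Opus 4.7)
My plan is to exploit the double centralizer decomposition $A \simeq B \otimes_F C$. Since $\sigma(B) = B$, the centralizer $C$ is also stable under $\sigma$, and with $\sigma_B = \sigma|_B$, $\sigma_C = \sigma|_C$, the involution $\sigma$ corresponds to $\sigma_B \otimes \sigma_C$ under this identification. The inclusion $\alt(B,\sigma_B) \subseteq \alt(A,\sigma) \cap B$ is immediate, so everything hinges on the reverse inclusion.

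The main device I would construct is an $F$-linear retraction $T \colon A \to B$ that intertwines the involutions. Concretely, it suffices to find a linear functional $\pi \colon C \to F$ satisfying $\pi(1)=1$ and $\pi \circ \sigma_C = \pi$ (equivalently, $\pi$ vanishing on $\alt(C,\sigma_C)$), and then set $T = \id_B \otimes \pi$. Given such a $T$, for $a \in \alt(A,\sigma) \cap B$ one writes $a = x - \sigma(x)$; applying $T$ and using $T|_B = \id_B$ and $T \circ \sigma = \sigma_B \circ T$, one obtains $a = T(x) - \sigma_B(T(x)) \in \alt(B,\sigma_B)$.

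The existence of $\pi$ is pure linear algebra: it is available precisely when $1 \notin \alt(C,\sigma_C)$. I would then observe that any element $c - \sigma_C(c)$ is skew under $\sigma_C$, so $1 \in \alt(C,\sigma_C)$ forces $1 = -1$, i.e., $\car F = 2$; and in that characteristic the condition $1 \in \alt(C,\sigma_C)$ is precisely the defining property of $\sigma_C$ being symplectic. Thus the non-exceptional hypotheses of the lemma are exactly those under which $\pi$ exists, and the averaging argument then closes.

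For the exceptional case I would exhibit a concrete failure. Since $\sigma_C$ is symplectic in $\car F = 2$, there is $c_0 \in C$ with $c_0 + \sigma_C(c_0) = 1$. For any $b \in \Sym(B,\sigma_B)$, a direct calculation yields $(b \otimes c_0) + \sigma(b \otimes c_0) = b$, whence $\Sym(B,\sigma_B) \subseteq \alt(A,\sigma) \cap B$. Because $\Sym(B,\sigma_B)$ strictly contains $\alt(B,\sigma_B)$ in characteristic $2$ (a standard dimensional fact for involutions of the first kind), the desired failure of equality follows. The only conceptually delicate point is recognizing that the symplectic type of $\sigma_C$ is exactly the obstruction to constructing $\pi$; once that observation is in hand the remainder is essentially the characteristic-zero averaging argument.
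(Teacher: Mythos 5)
Your proof is correct, and it reaches the lemma by a cleaner route than the paper, though the two arguments share the same skeleton: both use the decomposition $(A,\sigma)\simeq(B,\sigma|_B)\otimes(C,\sigma|_C)$, both locate the dichotomy in whether $1\in\alt(C,\sigma|_C)$ (which, by \cite[(2.6)]{knus} as recalled in the paper's preliminaries, happens exactly when $\car F=2$ and $\sigma|_C$ is symplectic), and the counterexample in the exceptional case is identical ($x\otimes 1$ for $x\in\sym(B,\sigma|_B)\setminus\alt(B,\sigma|_B)$, the strict inclusion following from the dimension count $\frac{m(m-1)}{2}<\frac{m(m+1)}{2}$). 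Where you differ is the core step: you invoke the abstract existence of a linear functional $\pi$ on $C$ with $\pi(1)=1$ and $\pi\circ\sigma|_C=\pi$, available precisely when $1\notin\alt(C,\sigma|_C)$, and then retract via $T=\id_B\otimes\pi$. The paper instead splits into cases: for $\car F\neq2$ it uses the averaging identity $x=\frac{1}{2}(x-\sigma(x))$, and for $\car F=2$ with $\sigma|_C$ orthogonal it constructs an explicit basis $\{e_1,\dots,e_{n^2}\}$ of $C$ (a basis of $\alt(C,\sigma|_C)$, extended through $\sym(C,\sigma|_C)$ with $e_{r+1}=1$, completed by elements $e_{s+i}$ satisfying $e_{s+i}-\sigma(e_{s+i})=e_i$), verifies this is a basis, expands $y$ with $x=y-\sigma(y)$ in it, and reads off that only the coefficient of $e_{r+1}=1$ survives when $x\in B$. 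Your $\pi$ is in effect the coordinate functional of $e_{r+1}$ in that basis (one checks directly that it is $\sigma|_C$-invariant), so your retraction packages the paper's computation invariantly; what your version buys is a characteristic-free, basis-free argument with no linear-independence verification, while the paper's version is more explicit and self-contained. All your supporting steps --- stability of $C$ under $\sigma$, the identification $\sigma=\sigma_B\otimes\sigma_C$ via the double centralizer theorem, the equivalence of $\pi\circ\sigma|_C=\pi$ with vanishing on $\alt(C,\sigma|_C)$, and the intertwining computation $T\circ\sigma=\sigma_B\circ T$ --- are sound.
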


\begin{proof}
It is enough to prove that if $x\in\alt(A,\sigma)\cap B$, then $x\in\alt(B,\sigma|_B)$.
Since $x\in\alt(A,\sigma)$, we have $\sigma(x)=-x$.
If $\car F\neq2$, then $x=\frac{1}{2}(x-\sigma(x))\in\alt(B,\sigma|_B)$ and we are done.
So suppose that $\car F=2$ and $(C,\sigma|_C)$ is orthogonal.
Suppose that $\dim_F C=n^2$.
Set $r=\frac{n(n-1)}{2}$ and $s=\frac{n(n+1)}{2}$ (note that $r+s=n^2$).
By \cite[(2.6 (2))]{knus} we have $\dim_F\alt(C,\sigma|_C)=r$ and $\dim_F\sym(C,\sigma|_C)=s$.
Let $\{e_1,\cdots,e_r\}$ be a basis of $\alt(C,\sigma|_C)$ and extend it to a basis $\{e_1,\cdots,e_s\}$ of $\sym(C,\sigma|_C)$.
As $1\notin\alt(C,\sigma|_C)$, we may assume that $e_{r+1}=1$.
Since $e_i\in\alt(C,\sigma|_C)$, $i=1,\cdots,r$, there exists $e_{i+s}\in C$ such that $e_{i+s}-\sigma(e_{i+s})=e_i$.
We claim that $\{e_1,\cdots,e_{n^2}\}$ is a basis of $C$.
Suppose that
\begin{eqnarray}\label{eq1}
\lambda_1e_1+\cdots+\lambda_{n^2}e_{n^2}=0,
\end{eqnarray}
where $\lambda_1,\cdots,\lambda_{n^2}\in F$.
As $\sigma(e_i)=e_i$ for $i=1,\cdots,s$, and $\sigma(e_i)=e_i-e_{i-s}$ for $i=s+1,\cdots,n^2$, we obtain $$\sigma(\lambda_1e_1+\cdots+\lambda_{n^2}e_{n^2})=\lambda_1e_1+\cdots+\lambda_{n^2}e_{n^2}-\lambda_{s+1}e_1-\cdots-\lambda_{n^2}e_{n^2-s}=0.$$
This, together with (\ref{eq1}) implies that $\lambda_{s+1}e_1+\cdots+\lambda_{n^2}e_{n^2-s}=0$.
Note that $n^2-s=r$ and $\{e_1,\cdots,e_r\}$ is a basis of $\alt(C,\sigma|_C)$, so $\lambda_{s+1}=\cdots=\lambda_{n^2}=0$ and we obtain $\lambda_1e_1+\cdots+\lambda_{s}e_{s}=0$.
Now since $\{e_1,\cdots,e_s\}$ is a basis of $\sym(C,\sigma|_C)$, it follows that $\lambda_1=\cdots=\lambda_s=0$.
So the claim is proved.

As $x\in\alt(A,\sigma)$, there exists $y\in A$ such that
$x=y-\sigma(y)$.
Since $(A,\sigma)\simeq (B,\sigma|_B)\otimes(C,\sigma|_C)$,
one can write $y=y_1\otimes e_1+\cdots+y_{n^2}\otimes e_{n^2}$, where $y_1,\cdots,y_{n^2}\in B$.
We have
{\setlength\arraycolsep{1pt}
\begin{eqnarray*}
x&=&y-\sigma(y)\\
&=&y_1\otimes e_1+\cdots+y_{n^2}\otimes e_{n^2}-(\sigma(y_1)\otimes e_1+\cdots+\sigma(y_s)\otimes e_s)\\
&&-(\sigma(y_{s+1})\otimes(e_{s+1}-e_1)+\cdots+\sigma(y_{n^2})\otimes(e_{n^2}-e_{n^2-s}))\\
&=&(y_1-\sigma(y_1)+\sigma(y_{s+1}))\otimes e_1+\cdots+(y_r-\sigma(y_r)+\sigma(y_{s+r}))\otimes e_r\\
&&+(y_{r+1}-\sigma(y_{r+1}))\otimes e_{r+1}+\cdots+(y_{n^2}-\sigma(y_{n^2}))\otimes e_{n^2}.
\end{eqnarray*}}
Since $x\in B$ and $e_{r+1}=1$, we obtain $x=(y_{r+1}-\sigma(y_{r+1}))\otimes1$, i.e., $x\in\alt(B,\sigma|_B)$.

Finally note that in the exceptional case we have $1\in\alt(C,\sigma|_C)$, so if $x\in\sym(B,\sigma|_B)\setminus\alt(B,\sigma|_B)$, then $x\otimes1\in(\alt(A,\sigma)\cap B)\setminus\alt(B,\sigma|_B)$.
\end{proof}

\begin{thm}\label{pfister}
Let $(Q_i,\sigma_i)$, $i=1,\cdots,n$, be a quaternion algebra with involution of the first kind over a field $F$ of characteristic $2$
and let $(A,\sigma)=(Q_1,\sigma_1)\otimes\cdots\otimes(Q_n,\sigma_n)$.
Suppose that $A$ splits.
\begin{itemize}
\item[\text{(a)}]
If $\sigma$ is of symplectic type, then $(A,\sigma)\simeq(M_2(F),\gamma)\otimes\cdots\otimes(M_2(F),\gamma)$.
\item[\text{(b)}]
If $\sigma$ is of orthogonal type, then $(A,\sigma)\simeq(M_2(F),T_{\alpha_1})\otimes\cdots\otimes(M_2(F),T_{\alpha_n})$, where $\alpha_i$, $i=1,\cdots,n$, is a representative of the class $\disc\sigma_i\in F^\times/F^{\times2}$.
\end{itemize}
\end{thm}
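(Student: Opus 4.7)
For part (a), I would combine two observations. First, in characteristic $2$, an involution is symplectic exactly when $1\in\alt$, and this property is inherited under tensor products: if $1=c-\sigma_j(c)$ for some $c\in Q_j$, then $1\otimes\cdots\otimes 1=(1\otimes\cdots\otimes c\otimes\cdots\otimes 1)-\sigma(1\otimes\cdots\otimes c\otimes\cdots\otimes 1)\in\alt(A,\sigma)$. Hence both $(A,\sigma)$ (by hypothesis) and $(M_2(F),\gamma)^{\otimes n}$ (since $\gamma$ is symplectic) are symplectic involutions on $M_{2^n}(F)$. Second, symplectic involutions on $M_{2^n}(F)$ are unique up to isomorphism, as they correspond to non-degenerate alternating bilinear forms on $F^{2^n}$, and such forms are unique up to isometry. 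Combining these yields $(A,\sigma)\simeq (M_2(F),\gamma)^{\otimes n}$.

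For part (b), the same tensor observation forces each $\sigma_i$ to be orthogonal (otherwise $\sigma$ would be symplectic). Pick a common splitting field $K/F$ for the $Q_i$, so that each $(Q_i)_K$ is a split orthogonal involution of discriminant $\alpha_iK^{\times 2}$. Lemma~\ref{b} then yields a $K$-bilinear space $(W_i,b_i)$ with an orthogonal basis $\{u_i,v_i\}$ whose $b_i$-values $1$ and $\alpha_i$ lie in $F$, and an identification $(Q_i)_K\simeq(\End_K(W_i),\ad_{b_i})$. Setting $V_i=Fu_i+Fv_i$, Lemma~\ref{ad} shows that $\ad_{b_i}$ stabilizes the $F$-subalgebra $\End_F(V_i)\subseteq\End_K(W_i)$, restricting to an involution isomorphic to $T_{\alpha_i}$. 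Tensoring over $F$ gives an $F$-subalgebra $D=\bigotimes_F\End_F(V_i)\subseteq A\otimes_F K$, stable under $\sigma\otimes\id_K$, with $(D,(\sigma\otimes\id_K)|_D)\simeq\bigotimes_{i=1}^n (M_2(F),T_{\alpha_i})$ by construction.

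The decisive final step will be proving $(A,\sigma)\simeq(D,(\sigma\otimes\id_K)|_D)$: both are $F$-forms of the same $K$-algebra with involution, both are split and orthogonal of degree $2^n$, but descent from $K$ to $F$ (where $K/F$ is typically purely inseparable) is not automatic. To carry this out, I would rigidify the comparison using the commuting alternating elements $r_i\in\alt(Q_i,\sigma_i)\subseteq Q_i\subseteq A$ from Remark~\ref{alt}, which satisfy $r_i^2=\alpha_i\in F^\times$; by Lemma~\ref{discQ}, each $r_i$ remains in $\alt(A,\sigma)$. Matching these to the analogous commuting alternating elements in $\bigotimes(M_2(F),T_{\alpha_i})$ and extending the matching by Skolem-Noether-type conjugation should yield the isomorphism, and this is where I anticipate the main technical obstacle to lie.
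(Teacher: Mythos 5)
Your part (a) is fine and is exactly the paper's argument (the paper simply invokes the same reasoning as in characteristic $\neq 2$: symplectic involutions on a split algebra are adjoint to nondegenerate alternating forms, which are unique up to isometry). The genuine gap is in part (b), and it is precisely the step you flag as the ``decisive final step'': it is never carried out, and the route you sketch for it does not work. After passing to an \emph{external} common splitting field $K$, you have two $F$-forms, $(A,\sigma)$ and $(D,(\sigma\otimes\id_K)|_D)$, of the same $K$-algebra with involution; but for the extensions needed to split the $Q_i$ in characteristic $2$ (typically purely inseparable quadratic towers), such forms are genuinely non-unique. Already for degree $2$: if $\alpha\notin F^{\times2}$, then $T_\alpha$ and $t$ on $M_2(F)$ are non-isomorphic over $F$ yet become isomorphic over $K=F(\alpha^{1/2})$, since $\disc$ trivializes. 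For $n\geqslant2$ matters are worse: orthogonal involutions on $M_{2^n}(F)$ correspond to similarity classes of symmetric bilinear forms, which in characteristic $2$ are not determined by the discriminant, so ``both split orthogonal of degree $2^n$ with matching invariants'' pins down essentially nothing. Your proposed rigidification does not close the hole either: the frame $F[r_1\otimes1\otimes\cdots,\,\ldots]$ is a commutative subalgebra which need not even be semisimple (it has nilpotents when some $\alpha_i\in F^{\times2}$), and even when Skolem--Noether applies and you conjugate so that the two involutions agree on this subalgebra, they still differ by an inner automorphism $\inn(g)$ with $g$ centralizing the frame; showing that this $g$ can be removed is essentially the whole theorem. (A minor misattribution: you do not need Lemma~\ref{discQ} to see $r_i\otimes1\otimes\cdots\otimes1\in\alt(A,\sigma)$ --- that is immediate from $r_i=c-\sigma_i(c)$; the lemma exists for the much subtler converse inclusion $\alt(A,\sigma)\cap B\subseteq\alt(B,\sigma|_B)$.)

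The paper's proof avoids descent altogether, and this is the idea you are missing: induct on $n$ and make the quadratic extension \emph{internal} to $A$. Assuming $Q_1$ is division (otherwise induct at once), Remark~\ref{alt} provides $u\in\alt(Q_1,\sigma_1)$ with $u^2=\alpha_1\in F^\times$; then $K=F(u)\subseteq A$, and the centralizer $C=C_A(u)$ is a \emph{split} central simple $K$-algebra with $(C,\sigma|_C)\simeq({Q_2}\otimes K,{\sigma_2}\otimes {\id}_K)\otimes\cdots\otimes({Q_n}\otimes K,{\sigma_n}\otimes {\id}_K)$. Crucially, $\sigma|_C$ is again of the first kind because $\sigma(u)=u$ in characteristic $2$ (this is exactly why the trick fails in characteristic $\neq2$, cf.\ Remark~\ref{proof}). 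The induction hypothesis over $K$ gives $(C,\sigma|_C)\simeq(M_2(K),T_{\alpha_2})\otimes\cdots\otimes(M_2(K),T_{\alpha_n})$, and your Lemmas~\ref{b} and~\ref{ad} --- which you deploy correctly --- are then used to descend \emph{this} decomposition to an honest $F$-subalgebra $B\subseteq C\subseteq A$, stable under $\sigma$, with $(B,\sigma|_B)\simeq(M_2(F),T_{\alpha_2})\otimes\cdots\otimes(M_2(F),T_{\alpha_n})$. The double centralizer theorem yields the internal factorization $(A,\sigma)\simeq(Q,\sigma|_Q)\otimes(B,\sigma|_B)$ with $Q=C_A(B)$ split of degree $2$, and Lemma~\ref{discQ} is exactly what identifies the last factor: $u\in Q\cap\alt(A,\sigma)$ forces $u\in\alt(Q,\sigma|_Q)$, whence $\disc\sigma|_Q=\alpha_1F^{\times2}=\disc\sigma_1$ and $(Q,\sigma|_Q)\simeq(M_2(F),T_{\alpha_1})$. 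So your use of Lemmas~\ref{b} and~\ref{ad} is in the right spirit, but without the internal-centralizer induction the argument stops short of the conclusion.
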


\begin{proof}
If $\sigma$ is of symplectic type, the same argument as the case where $\car F\neq2$ works, see \cite[p. 2]{becher}.
So suppose that $\sigma$ is of orthogonal type.
We use induction on $n$.
If $n=1$, the result is trivial.
Suppose that $n\geqslant2$.
By \cite[(2.23)]{knus}, each involution $\sigma_i$ on $Q_i$, $i=1,\cdots,n$, is of orthogonal type.
If $Q_1$ splits then $Q_2\otimes\cdots\otimes Q_n$ splits and the result follows from induction hypothesis.
So suppose that $Q_1$ is a division algebra.
Let $\alpha_i\in F^\times$, $i=1,\cdots,n$, be a representative of the class $\disc\sigma_i\in F^\times/F^{\times2}$.
By (\ref{alt}), there exists $u_1\in\alt(Q_1,\sigma_1)$ such that $\alpha_1=u_1^2\in F^\times$.
Set $u=u_1\otimes1\otimes\cdots\otimes1\in A$.
Then $u^2=\alpha_1$, $\sigma(u)=u$ and $u\in\alt(A,\sigma)$.
Let $C=C_A(u)$ be the centralizer of $u$ in $A$ and let $K=F(u)$.
Then $K/F$ is a quadratic field extension, $Z(C)=K$ and $C\simeq Q_2\otimes\cdots\otimes Q_n\otimes K$ is a central simple $K$-algebra.
Since $\sigma|_K=\id$, we have $(C,\sigma|_C)\simeq_K(Q_2,\sigma_2)\otimes\cdots\otimes(Q_n,\sigma_n)\otimes(K,\id)$.
So
$$(C,\sigma|_C)\simeq_K({Q_2}\otimes K,{\sigma_2}\otimes {\id}_K)\otimes\cdots\otimes({Q_n}\otimes K,{\sigma_n}\otimes {\id}_K).$$
Since $Q_1\otimes K$ splits, $C$ splits.
Also every $({Q_i}\otimes K,{\sigma_i}\otimes {\id}_K)$, $2\leqslant i\leqslant n$, is a quaternion algebra with involution of orthogonal type over $K$ and $\disc({\sigma_i}\otimes {\id}_K)=\alpha_i K^{\times2}$.
So by induction hypothesis we have $(C,\sigma|_C)\simeq(M_2(K),T_{\alpha_2})\otimes\cdots\otimes(M_2(K),T_{\alpha_n})$.
Since $\alpha_i\in F$, by (\ref{b}), there exists a nonalternating symmetric bilinear space $(W_i,b_i)$, $i=2,\cdots,n$, over $K$ with an orthogonal basis $\{u_i,w_i\}$
such that $(M_2(K),T_{\alpha_i})\simeq(\End_K(W_i),\ad_{b_i})$, $b_i(u_i,u_i)=1\in F^\times$ and $b_i(w_i,w_i)=\alpha_i\in F^\times$.
Let $V_i=Fu_i+Fw_i$, $M_i=\End_F(V_i)\subseteq \End_K(W_i)$ and $\rho_i=\ad_{b_i}$, $i=2,\cdots,n$.
Since $b_i(x,x)\in F$ for every $x\in V_i$, the restriction of $b_i$ to $V_i\times V_i$ is a bilinear form on $V_i$.
In particular by (\ref{ad}), we have $\rho_i(M_i)=M_i$.
We also have $\disc\rho_i|_{M_i}=\disc b_i|_{V_i\times V_i}=\alpha_i F^{\times2}$.
So $(M_i,\rho_i|_{M_i})\simeq(M_2(F),T_{\alpha_i})$.
Set $\rho=\rho_2\otimes\cdots\otimes\rho_n$ and $M=M_2\otimes\cdots\otimes M_n\subseteq \End_K(W_2)\otimes\cdots\otimes \End_K(W_n)$.
Then $M$ is a central simple $F$-algebra.
Let $B\subseteq A$ be the image of $M$ under the injection
{\setlength\arraycolsep{2pt}
\begin{eqnarray*}
{\End}_K(W_2)\otimes\cdots\otimes{\End}_K(W_n)\simeq M_2(K)\otimes\cdots\otimes M_2(K)\simeq C\hookrightarrow A,
\end{eqnarray*}}
where the last map is the inclusion $i:C\hookrightarrow A$.
Note that we have $B\subseteq C\subseteq A$ and $B$ is a simple $F$-subalgebra of $A$ with $Z(B)=F$.
Set $Q=C_A(B)$.
Then $Q$ is an $F$-algebra.
Also since $\rho(M)=M$, we have $\sigma(B)=B$ and therefore $\sigma(Q)=Q$.
So
\begin{eqnarray*}
(A,\sigma)&\simeq&(Q,\sigma|_Q)\otimes(B,\sigma|_B)\simeq(Q,\sigma|_Q)\otimes(M,\rho|_M)\\
&\simeq&(Q,\sigma|_Q)\otimes(M_2(F),T_{\alpha_2})\otimes\cdots\otimes(M_2(F),T_{\alpha_n}).
\end{eqnarray*}
Since $A$ splits, $Q$ splits.
So $(Q,\sigma|_Q)\simeq(M_2(F),T_{\alpha})$, where $\alpha\in F^\times$ is a representative of the class $\disc\sigma|_Q\in F^\times/F^{\times2}$.
Finally note that $u\in Z(C)=K$ and $B\subseteq C$, so $u$ commutes with every element of $B$, i.e., $u\in Q$.
As $u\in\alt(A,\sigma)$, we obtain $u\in Q\cap\alt(A,\sigma)$.
It follows from (\ref{discQ}) that $u\in\alt(Q,\sigma|_Q)$.
Also as $u^2=\alpha_1\in F$, we have $\disc\sigma|_Q=\alpha_1F^{\times2}=\disc\sigma_1$ which completes the proof.
\end{proof}

\begin{rem}\label{proof}
The idea of the proof of (\ref{pfister} (b)) does not work if $\car F\neq2$.
In fact if $\car F\neq2$, then for every $u\in\alt(A,\sigma)$ we have $\sigma(u)=-u$.
So the restriction of $\sigma$ to $C=C_A(u)$ is an involution of the second kind and the induction hypothesis cannot be used for $(C,\sigma|_C)$.
\end{rem}

Let $F$ be a field.
An $F$-algebra with involution $(A,\sigma)$ is called {\it isotropic} if there exists $0\neq a\in A$ such that $\sigma(a)a=0$.
Otherwise $(A,\sigma)$ is called {\it anisotropic}.
 An idempotent $e\in A$ is called {\it metabolic} with respect to $\sigma$ if $\sigma(e)e = 0$ and $\dim_FeA = \frac{1}{2}\dim_FA$.
 An algebra with
involution $(A,\sigma)$ is called {\it metabolic} if $A$ contains a metabolic idempotent with respect to $\sigma$.
This notion was first introduced in \cite{berhuy}.
It can be shown that metabolic involutions are adjoint to metabolic hermitian
forms (i.e., hermitian spaces in which there
exists a self orthogonal subspace), see \cite[(4.8)]{dolphin}.
It is known that bilinear Pfister forms, i.e., the forms of the shape
$\langle1,\alpha_1\rangle\otimes\cdots\otimes\langle1,\alpha_n\rangle$, are either anisotropic
or metabolic, see \cite[(6.3)]{elman}.
These, together with (\ref{pfister}) imply that:

\begin{cor}
Let $(A,\sigma)\simeq(Q_1,\sigma_1)\otimes\cdots\otimes(Q_n,\sigma_n)$ be a tensor product of quaternion algebras with involution of the first kind over a field $F$ of characteristic $2$.
If $K$ is a splitting field for $A$, then $(A,\sigma)_K$ is either anisotropic or metabolic.
\end{cor}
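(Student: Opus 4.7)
The plan is to reduce to the case where $A$ itself is split by passing from $F$ to $K$; since anisotropy and metabolicity of $(A,\sigma)_K$ are intrinsic properties of the algebra with involution over $K$, I may simply assume $F = K$ and argue that a split tensor product of quaternion algebras with involution is anisotropic or metabolic. The argument then splits according to the type of $\sigma$, which by \cite[(2.23)]{knus} is the same for every factor.

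For the symplectic case, Theorem \ref{pfister}(a) gives an isomorphism $(A,\sigma) \simeq (M_2(F),\gamma)^{\otimes n}$, and I would exhibit a metabolic idempotent explicitly. The matrix unit $e_0 = \bigl(\begin{smallmatrix} 1 & 0 \\ 0 & 0\end{smallmatrix}\bigr)$ satisfies $\gamma(e_0)e_0 = 0$ and $\dim_F e_0 M_2(F) = 2 = \tfrac{1}{2}\dim_F M_2(F)$. Taking $e = e_0 \otimes 1 \otimes \cdots \otimes 1 \in A$, one checks immediately that $\sigma(e)e = \gamma(e_0)e_0 \otimes 1 \otimes \cdots \otimes 1 = 0$ and $\dim_F eA = 2 \cdot 4^{n-1} = \tfrac{1}{2}\dim_F A$, so $e$ is a metabolic idempotent and $(A,\sigma)$ is metabolic.

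For the orthogonal case, Theorem \ref{pfister}(b) yields $(A,\sigma) \simeq (M_2(F),T_{\alpha_1}) \otimes \cdots \otimes (M_2(F),T_{\alpha_n})$ where $\alpha_i \in F^\times$ represents $\disc \sigma_i$. By Lemma \ref{b}, each factor is isomorphic to $(\End_F(V_i), \ad_{b_i})$ with $b_i = \langle 1, \alpha_i\rangle$. Taking the tensor product, $(A,\sigma)$ is isomorphic to the adjoint algebra of the bilinear Pfister form
\[
\beta = \langle 1, \alpha_1\rangle \otimes \cdots \otimes \langle 1, \alpha_n\rangle.
\]
By \cite[(6.3)]{elman}, $\beta$ is either anisotropic or metabolic, and the standard dictionary between bilinear forms and their adjoint involutions (as codified in \cite[(4.8)]{dolphin}) transfers this dichotomy to $(A,\sigma)$.

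I do not anticipate a serious obstacle here: all ingredients are in place from the preceding material, and the two cases are really only the application of Theorem \ref{pfister} followed, respectively, by an elementary idempotent computation and by the known behaviour of bilinear Pfister forms under the adjoint equivalence. The only point that requires a moment's care is verifying that the tensor product of the involutions $\ad_{b_i}$ is indeed the adjoint of $\beta$ (which is standard) and that metabolicity transfers through this adjoint correspondence, for which the cited result from \cite{dolphin} suffices.
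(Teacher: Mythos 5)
Your proposal is correct and follows essentially the same route as the paper, which likewise obtains the dichotomy by combining Theorem~\ref{pfister} with the fact that bilinear Pfister forms are anisotropic or metabolic \cite[(6.3)]{elman} and the adjoint correspondence for metabolic involutions \cite[(4.8)]{dolphin}; your explicit idempotent $e_0\otimes1\otimes\cdots\otimes1$ in the symplectic case is just a concrete instance of the same mechanism (there $\gamma$ is adjoint to the alternating, hence metabolic, plane). No gaps: the scalar-extension reduction and the identification of $T_{\alpha_1}\otimes\cdots\otimes T_{\alpha_n}$ with $\ad_{\langle1,\alpha_1\rangle\otimes\cdots\otimes\langle1,\alpha_n\rangle}$ are exactly what the paper's one-line derivation tacitly uses.
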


Finally, using (\ref{tint}), (\ref{pfister}) can be reformulated as follows:
\begin{cor}
Let $(V,q)$ be a $2n$-dimensional quadratic space over a field $F$ of characteristic $2$ and let $\tau$ be an involution in $O(V,q)$.
Let $m=\dim\fix(V,\tau)-\frac{1}{2}\dim V$.
If $C(V)$ splits, then $(C(V),J_\tau)\simeq(M_2(F),\sigma_1)\otimes\cdots\otimes(M_2(F),$ $\sigma_n)$,
where the involution $\sigma_i$, $i=1,\cdots,n$, on $M_2(F)$ can be chosen as follows:
\begin{itemize}
\item[\text{(a)}]{If $m=0$ and $\tau$ is of reflectional kind, i.e., $\tau=\tau_1\perp\cdots\perp\tau_n$ is an orthogonal sum of $n$ reflections, then $\sigma_i=T_{\alpha_i}$,
 where $\alpha_i\in F$ is a representative of the class $\theta(\tau_i)\in F^{\times}/F^{\times2}$, $i=1,\cdots,n$.}
\item[\text{(b)}]{If $m=0$ and $\tau$ is of interchanging kind, i.e., $\tau$ is an orthogonal sum of interchange isometries, then $\sigma_i=t$, $i=1,\cdots,n$.}
\item[\text{(c)}]{If $m\neq0$, then $\sigma_i=\gamma$, $i=1,\cdots,n$.}
\end{itemize}
\end{cor}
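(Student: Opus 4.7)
The plan is to combine the Wiitala decomposition of $(V,\tau)$ with Theorem \ref{pfister}. First, fix a Wiitala decomposition $V = W \perp \mathbb{E}_1 \perp \cdots \perp \mathbb{E}_r \perp \mathbb{A}_1 \perp \cdots \perp \mathbb{A}_s$, and further decompose the fixed summand $W$ into an orthogonal sum of quadratic planes using a symplectic basis (so that $\tau$ still restricts to the identity on each of these planes). The Clifford algebra then breaks as a tensor product of the Clifford algebras of these $2$- and $4$-dimensional pieces, and $J_\tau$ decomposes as the corresponding tensor product of involutions. By Proposition \ref{clif}, each $2$-dimensional summand contributes a quaternion algebra with involution of the first kind---symplectic on the summands of $W$ (since $\tau$ acts trivially) and orthogonal with discriminant $\theta(\tau|_{\mathbb{E}_i})$ on each $\mathbb{E}_i$. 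By Proposition \ref{cint}, each $4$-dimensional interchange summand $\mathbb{A}_j$ contributes a tensor product of two quaternion algebras with orthogonal involution of trivial discriminant. A dimension count shows that this exhibits $(C(V),J_\tau)$ as a tensor product of exactly $n$ quaternion algebras with involution of the first kind; since $C(V)$ splits, Theorem \ref{pfister} applies.

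The three cases are then separated by Proposition \ref{orthog}, which tells us that $J_\tau$ is orthogonal if and only if $m = 0$ and symplectic if and only if $m \neq 0$. In case (c), where $m \neq 0$, the involution $J_\tau$ is symplectic and Theorem \ref{pfister}(a) immediately yields $(C(V),J_\tau) \simeq (M_2(F),\gamma)^{\otimes n}$. In case (a), where $m = 0$ and $\tau$ is of reflectional kind, the Wiitala decomposition has $W = 0$, $s = 0$, and $r = n$; each of the $n$ quaternion factors is orthogonal with discriminant $\theta(\tau_i)$, and Theorem \ref{pfister}(b) produces $\sigma_i = T_{\alpha_i}$ for any representative $\alpha_i$ of $\theta(\tau_i) \in F^\times/F^{\times 2}$. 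In case (b), where $m = 0$ and $\tau$ is of interchanging kind, the decomposition has $W = 0$, $r = 0$, and $s = n/2$; by Proposition \ref{cint} every quaternion factor is orthogonal with trivial discriminant, and Theorem \ref{pfister}(b) with every $\alpha_i = 1$ gives $(M_2(F),T_1)^{\otimes n} = (M_2(F),t)^{\otimes n}$.

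The main point---rather than a genuine obstacle---is the identification of $(C(V),J_\tau)$ as a tensor product of $n$ quaternion algebras with involution respecting the Wiitala decomposition. This rests on the multiplicativity of Clifford algebras under orthogonal sums of quadratic spaces together with the concrete descriptions of the $2$- and $4$-dimensional pieces furnished by Propositions \ref{clif} and \ref{cint}; once the right quaternion factorisation and its discriminant data are in hand, Theorem \ref{pfister} carries out the remaining identification.
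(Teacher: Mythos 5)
Your proof is correct and follows essentially the route the paper intends: the corollary is stated without a written proof as a reformulation of Theorem \ref{pfister} obtained via the Wiitala decomposition and the Clifford-algebra computations of Section \ref{sec-clif} (Propositions \ref{clif}, \ref{cint}, \ref{orthog} and Corollary \ref{tint}), which is exactly the argument you spell out. The one ingredient you invoke without justification---that in characteristic $2$ the Clifford algebra of an orthogonal sum is the \emph{ordinary} tensor product of the Clifford algebras of the summands, with $J_\tau$ decomposing accordingly (orthogonal vectors commute in $C(V)$ since $-1=1$)---is equally implicit in the paper, for instance in the proof of Proposition \ref{trans}, so it does not constitute a gap.
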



\begin{thebibliography}{MM}

\bibitem{fluckiger} E. Bayer-Fluckiger, R. Parimala, A. Qu\'eguiner-Mathieu, Pfister Involutions. {\it Proc. Indian
Acad. Sci. Math. Sci.} {\bf 113} (2003), no. 4, 365--377.

\bibitem{bayer} E. Bayer-Fluckiger, D. B. Shapiro, J.-P. Tignol, Hyperbolic involutions. {\it Math. Z.} {\bf 214} (1993), no. 3, 461--476.

\bibitem{becher} K. J. Becher,  A proof of the Pfister factor
  conjecture. {\it Invent. Math.} {\bf 173} (2008), no. 1, 1--6.

\bibitem{berhuy}
G. Berhuy,  C. Frings,  J.-P. Tignol,
Galois cohomology of the classical groups over imperfect fields.
{\it J. Pure Appl. Algebra} {\bf 211} (2007), no. 2, 307--341.

\bibitem{dolphin} A. Dolphin, Metabolic involutions. {\it J. Algebra} {\bf 336} (2011), 286--300.

\bibitem{elduque}
A. Elduque,
Composition of quadratic forms and the Hurwitz-Radon function in characteristic 2.
{\it Linear Algebra Appl.} {\bf 348} (2002), 87--103.

\bibitem{elman}
R. Elman, N. Karpenko, A. Merkurjev,
{\it The algebraic and geometric theory of quadratic forms.}
American Mathematical Society Colloquium Publications, 56. American Mathematical Society, Providence, RI, 2008.

\bibitem{garibaldi} S. Garibaldi, R. Parimala, J.-P. Tignol, Discriminant of symplectic involutions, {\it Pure Appl. Math. Q.} {\bf 5} (2009), no. 1, 349--374.

\bibitem{grenier}
N. Grenier-Boley, E. Lequeu, M. G. Mahmoudi,
On Hermitian Pfister forms.
{\it J. Algebra Appl.} {\bf 7} (2008), no. 5, 629--645.

\bibitem{karpenko}
N. A. Karpenko,
Hyperbolicity of orthogonal involutions, With an appendix by Jean-Pierre Tignol.
{\it Doc. Math.} Extra volume: Andrei A. Suslin sixtieth birthday, (2010) 371--392.

\bibitem{knus} M.-A. Knus, A. S. Merkurjev, M. Rost, J.-P. Tignol, {\it The book of involutions}. American Mathematical Society Colloquium Publications, 44. American Mathematical Society, Providence, RI, 1998.

\bibitem{mahmoudi} M. G. Mahmoudi, A.-H. Nokhodkar, Involutions of a Clifford algebra induced by involutions of orthogonal group in characteristic $2$. LAG preprint server, http://www.math.uni-bielefeld.de/LAG/man/479.html.

\bibitem{serhir} A. Serhir, J.-P. Tignol, The discriminant of a decomposable symplectic involution.
{\it J. Algebra} {\bf 273} (2004), no. 2, 601--607.

\bibitem{shapiro} D. B. Shapiro, {\it Compositions of quadratic forms}.
de Gruyter Expositions in Mathematics, 33. Walter de Gruyter \& Co.,
Berlin, 2000.

\bibitem{sivatski} A. S. Sivatski, Applications of Clifford algebras to involutions and quadratic forms.
{\it Comm. Algebra} {\bf 33} (2005), no. 3, 937--951.

\bibitem{wads-shap}
A. R. Wadsworth,  D. B. Shapiro,
Spaces of similarities. III. Function fields.
{\it J. Algebra} {\bf 46} (1977), no. 1, 182--188.

\bibitem{wiitala} S. A. Wiitala, Factorization of involutions in characteristic two orthogonal groups: an application of the Jordan form to group theory. {\it Linear Algebra Appl.} {\bf 21} (1978), no. 1, 59--64.

\bibitem{yuzvinsky}
S. Yuzvinsky,
Composition of quadratic forms and tensor product of quaternion algebras.
{\it J. Algebra} {\bf 96} (1985), no. 2, 347--367.

\end{thebibliography}
\end{document}